\theoremstyle{definition}
\newtheorem{stat}{Proposition}[section]
\newtheorem{exam}{Example}
\newtheorem{defn}{Definition}[section]
\theoremstyle{remark}
\numberwithin{equation}{section} 
\begin{document}

\title{On One Function Defined on the Cartesian Product and Guinness Numbers}
\author{R.A.Zatorsky}
\begin{abstract}
New numbers, called Guinness numbers, are introduced using certain function of natural argument. Few problems related to these numbers are formulated.
\end{abstract}


\maketitle

''The year 2000 was approaching. I realized humans living on the Earth were killing It with their "civilization" and "civility" and finally would perish themselves like bacteria in a dying human organism. But I rejoiced at our fate to live to see the turn into the new millennium. As my destiny is to be involved with mathematics, I felt a burning desire to "exalt" the year 2000 using the language of mathematics. Right then I thought up the mathematical idea described below, which to some extent made it possible to achieve the desired goal. I saw that the number $G_{2000}$ is the one-half Guinness number, but could not publish it due to some reasons. Then, due to similar reasons, I missed the successive one-half Guinness numbers $G_{2001}, G_{2006}$ and $G_{2007}.$ But I cannot but record the next one-half number Guinness $G_{2013}$ for I do not know whether I will see the year 2031 when the new one-half Guinness number $G_{2031}$ "appears"'' --- this is how the preface to the author's book ''Guinness Number $G_{2013}$'' begins. Besides the preface, this book includes only one number, the so-called one-half Guinness number $G_{2013}.$

\begin{center}
\resizebox{0.49\textwidth}{!}{\includegraphics{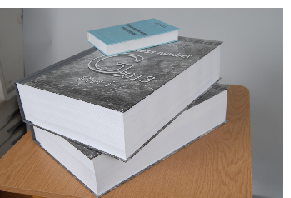}}\hfill
\resizebox{0.49\textwidth}{!}{\includegraphics{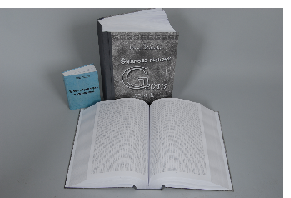}}\nopagebreak

\emph{This photo presents the book ''Guinness Number $G_{2013}$'', which consists of two parts. As a comparison, next to it is the author's book of standard size containing 508 pages.}
\end{center}

 The Guinness number $G_{2013}$ consists of 40,259,996 digits and is the smallest positive integer\footnote{April 04, 1978, at the scheduled meeting of the Academic Council of the Computing Center at the AS of the USSR, which was held to discuss the next thesis for a candidate degree in physics and mathematics, the question was raised whether the submitted thesis qualified for a candidate degree in physics and mathematics or rather a candidate degree in science. The thesis contained no theorem; there was only one program written in LISP, which allowed to determine the operations of artificial intelligence. The opinions of the present council members divided. The director of the institute A.Dorodnitsin had the say. He stated that \emph{each program can be considered a theorem on the solubility of some problem. And if the program works, then one can claim that the theorem is proved.}
This statement was supported by A.Markov who suggested that the statement be specified: \emph{"Each program is a theorem, the validity of which is proved for those cases when the program gives the correct answer."} Thus, to prove the fact that this number is the smallest positive integer with the described above properties, one is to develop a simple program and the computer is to produce the expected result.} having the following nontrivial properties: the first four figures of the number $G_{2013}$ make up the number 2013. If these first four figures are removed and added to the end of the number $G_{2013}$ in the same order, the formed number will be reduced by 2013 times. The book consists of two parts with the total number of pages - 3655. Each page, except the last one, contains 106 lines with 104 signs in each of them.

Following is the definition of the whole and one-half Guinness numbers and the special function $f_n,$  with the help of which these numbers can be generated. At the end of the article you can find a number of questions I failed to find the answer to as well as some useful applications of the function $f_n.$

\section{Operation $f_n$ and Guinness Numbers}
\begin{verse}
\emph{Let's call numbers by People's name.\\
People will remember numbers as long as they exist.\\
Numbers will "remember" people in case the latter disappear.}
\end{verse}

Let us have some positive $k$-digit number $n$ and assign the Cartesian product to each such a positive number
 $$\Omega_n=\{0,1,2,\ldots, 10^k-1\}\times
\{0,1,2,\ldots,n-1\}.$$
This Cartesian product can be presented graphically on the Cartesian plane as a rectangle of integral points (points with integer coordinates) with the main diagonal connecting the beginning of the coordinates with the point $(10^k-1,n-1)$.

\begin{defn}
\emph{Let the function $f_n:\Omega_n\rightarrow\Omega_n$ assign the point $(x',y')\in\Omega_n$ to the arbitrary point  $(x,y)\in\Omega_n$, i.e.  $f_n(x,y)=(x',y')$, while the following equality holds}
 $$nx+y=x'+10^ky'.$$
\end{defn}

\emph{The function inverse} to the function $f_n$ is the function $f_n^{-1}$, when the equality $f^{-1}_n(x',y')=(x,y)$  is performed as long as $f_n(x,y)=(x',y').$

	It is easy to prove that both functions defined above are bijective mappings of the set $\Omega_n$ into themselves.
	The point $(x,y)$  is \emph{the fixed point of the function} $f_n,$ provided that $f_n(x,y)=(x,y)$. It is easy to prove that the coordinates of the points of the function $f_n$ satisfy the equality
$$(n-1)x=(10^k-1)y,$$ and these points are on the main diagonal of the Cartesian rectangle $\Omega_n$ , while for an arbitrary positive $n$ the function $f_n$ has at least two fixed points $(0,0)$ and $(10^k-1,n-1).$

\begin{exam}
\label{exam1} All the fixed points of the function $f_{34}$ in the set $\Omega_{34}$ are defined through the equation $$x=3y.$$ They are $34$ points: $(3i,i),\,\,i=0,1,\ldots,33.$
\end{exam}

As the set $\Omega_n$ is bounded and consists only of fixed and moving points, it is arguable that with the help of some initial point $(x,y),$ which is called \emph{the generating point}, the function $f_n$ generates \emph{the orbit}
$$O_n(x,y)=\{(x,y),f_n(x,y),f_n^2(x,y),\ldots,f_n^{r-1}\}$$
of length $r$, where $r$ is the smallest positive integer, for which the following equality holds $f_n^r(x,y)=(x,y)$.

Below among all the generating points $(x,y)$ of the orbit $O_n(x,y),$ generated by the function $f_n,$ the point $(n,0)$ is of special importance and is called \emph{a standard generating point} of the set $\Omega_n.$

\begin{defn}
\label{def4}\emph{The point
$\overline{(x,y)}=(\overline{x},\overline{y})$} is \emph{the conjugate point}
to the point $(x,y),$ if the following equalities hold
\begin{equation}\label{6}
    \overline{x}=10^k-1-x,\,\,\overline{y}=n-1-y.
\end{equation}
\end{defn}

It is easy to show that the conjugate points are symmetric relative to the point, which is the symmetry center of the square $\Omega_n.$

\begin{stat}\label{stat.f.spr}
\emph{For the arbitrary point $(x,y)\in\Omega_n$ from the equality \begin{equation}\label{8}
    f_n(x,y)=\overline{(x,y)}
\end{equation}
follows the equality
\begin{equation}\label{9}
    f_n\overline{(x,y)}=(x,y)
\end{equation} and vice versa.}
\end{stat}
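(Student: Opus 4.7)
The plan is to unfold both sides of the biconditional into explicit numerical equations using the definition of $f_n$ together with the formulas $\overline{x}=10^k-1-x$, $\overline{y}=n-1-y$ from Definition \ref{def4}, and to check that the two resulting equations in $x,y$ coincide. Before doing so I would observe that $\overline{(x,y)}\in\Omega_n$ whenever $(x,y)\in\Omega_n$ (the coordinates of the conjugate lie in the correct ranges), so both sides of \eqref{8} and \eqref{9} make sense as statements about points of $\Omega_n$.

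For the forward direction I would substitute $(x',y')=(\overline{x},\overline{y})$ into the defining relation $nx+y=x'+10^ky'$ of $f_n$. This reads
$$nx+y=(10^k-1-x)+10^k(n-1-y),$$
and after collecting terms turns into the single symmetric identity
$$(n+1)\,x+(10^k+1)\,y=n\cdot 10^k-1. \qquad(\star)$$
Thus \eqref{8} is equivalent to $(\star)$.

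For the reverse direction I would apply the defining relation of $f_n$ to the point $\overline{(x,y)}$: the equality $f_n(\overline{x},\overline{y})=(x,y)$ reads
$$n(10^k-1-x)+(n-1-y)=x+10^ky.$$
Expanding and rearranging the left-hand side would produce exactly the same identity $(\star)$. Therefore \eqref{9} is equivalent to $(\star)$ as well, which yields \eqref{8}$\Leftrightarrow$\eqref{9}.

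The only real step in the argument is noticing that the two equations collapse to the common symmetric form $(\star)$; there is no serious obstacle, merely a short symbolic verification, and the underlying reason is that $(\star)$ treats $(x,y)$ and $\overline{(x,y)}$ on an equal footing, which matches the geometric picture that conjugation is the central symmetry of $\Omega_n$.
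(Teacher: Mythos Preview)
Your argument is correct: both \eqref{8} and \eqref{9} reduce, after substituting the formulas for $\overline{x},\overline{y}$ into the defining relation $nx+y=x'+10^ky'$, to the same linear equation $(n+1)x+(10^k+1)y=n\cdot10^k-1$, and the equivalence follows immediately.

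The paper does not actually write out a proof of this proposition; it is stated and followed directly by the remark about orbits of length~2. The technique you use, however, is exactly the one the paper employs in the proof of the next proposition (Proposition~\ref{stat5}): there the authors start from $nx+y=10^ky'+x'$ and compute $n(10^k-1-x)+n-1-y=10^k(n-1-y')+(10^k-1-x')$ directly. Your computation is the special case $(x',y')=(\overline{x},\overline{y})$ of that identity (noting $\overline{\overline{(x,y)}}=(x,y)$), so your approach matches the paper's method.
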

Thus, sometimes conjugate points form the orbit of length 2.

\begin{exam}
In the set $\Omega_n,$ where $n$ is a single-digit or double-digit number, there are no self-conjugate orbits of length $2$; in the set $\Omega_{103}$ there are $6$ different self-conjugate orbits of length $2:$ $$(76,95)\leftrightarrow(923,7), (153,87)\leftrightarrow(846,15), (230,79)\leftrightarrow(769,23),$$$$ (307,71)\leftrightarrow(692,31), (384,63)\leftrightarrow(615,39), (461,55)\leftrightarrow(538,47),$$
and in the set $\Omega_{142}$ they number $71.$
\end{exam}

\begin{stat}
\label{stat5} \emph{Given two non-conjugate points $(x,y), (x',y')$ of the set $\Omega_n,$ the equality
$$f_n(x,y)=(x',y'),$$ implies the equality
$$f_n\overline{(x,y)}=\overline{(x',y')}.$$}
\end{stat}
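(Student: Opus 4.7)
The plan is to unpack the definition of $f_n$ on the original pair and then perform a direct computation on the conjugate pair, verifying that the resulting equality is exactly the defining relation of $f_n$ applied to $\overline{(x,y)}$.

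First I would write the hypothesis $f_n(x,y)=(x',y')$ in its arithmetic form, namely $nx+y=x'+10^k y'$. Then I would substitute the conjugate coordinates $\overline{x}=10^k-1-x$ and $\overline{y}=n-1-y$ into the expression $n\overline{x}+\overline{y}$ and simplify. One gets
\[
n\overline{x}+\overline{y}=n(10^k-1-x)+(n-1-y)=n\cdot 10^k-1-(nx+y).
\]
On the other side, a parallel computation yields
\[
\overline{x'}+10^k\overline{y'}=(10^k-1-x')+10^k(n-1-y')=n\cdot 10^k-1-(x'+10^k y').
\]
The hypothesis immediately shows these two expressions coincide, so $n\overline{x}+\overline{y}=\overline{x'}+10^k\overline{y'}$.

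The only subtlety is that to conclude $f_n\overline{(x,y)}=\overline{(x',y')}$, I need the pair $(\overline{x'},\overline{y'})$ to actually lie in $\Omega_n$ so that the decomposition $a+10^kb$ with $0\le a<10^k$ is the unique one that defines the value of $f_n$. This is routine: from $0\le x'\le 10^k-1$ and $0\le y'\le n-1$ it follows directly that $0\le \overline{x'}\le 10^k-1$ and $0\le \overline{y'}\le n-1$. So the uniqueness of the ``base-$10^k$ decomposition'' of a nonnegative integer into a digit below $10^k$ plus a multiple of $10^k$ finishes the argument.

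The main (and really only) obstacle is bookkeeping with the signs in the conjugation formulas and being explicit about the uniqueness of the representation $nx+y=x'+10^k y'$ used to define $f_n$. I would also include one sentence explaining the role of the non-conjugacy assumption: it guarantees that the implication produces genuinely new information rather than collapsing to the already-established symmetry $f_n(x,y)=\overline{(x,y)}\Leftrightarrow f_n\overline{(x,y)}=(x,y)$ of Proposition~\ref{stat.f.spr}; the algebraic identity itself is valid in either case.
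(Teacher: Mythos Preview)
Your argument is correct and follows essentially the same direct computation as the paper's proof: rewrite the hypothesis as $nx+y=x'+10^{k}y'$, expand $n\overline{x}+\overline{y}$, and recognise the result as $\overline{x'}+10^{k}\overline{y'}$. Your additional remarks---on the uniqueness of the base-$10^{k}$ representation needed to read off the value of $f_n$, and on the purely contextual role of the non-conjugacy hypothesis---are valid refinements that the paper leaves implicit.
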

\begin{proof} The first equality of this proposition equals the equality
$$nx+y=10^ky'+x'.$$ Taking it into consideration, we have the following equalities
$$n(10^k-1-x)+n-1-y=10^kn-1-(nx+y)=10^k(n-1-y')+(10^k-1-x'),$$
which are equal to the second equality of this proposition.\end{proof}

Thus, if among the points of the orbit $O_n(x,y)$, generated by the function $f_n$, there is no pair of conjugate points, then in the Cartesian set $\Omega_n$ there is one more orbit $O_n(\overline{x},\overline{y}),$  which is called the conjugate orbit to the orbit $O_n(x,y)$, while graphically these orbits are symmetric relative to the symmetry center of the rectangle $\Omega_n$. There are also self-conjugate orbits containing interconjugate points only. If presented graphically, these orbits are centrosymmetric.

\begin{center}\psset{linewidth=0.4pt, arrowsize=3pt 2, arrowlength=1.5}
\begin{pspicture}(0,0)(3.3,2.2)
\pscircle(0.3,0.3){0.3} \pscircle(3,0.3){0.3}
\pscircle(0.3,1.9){0.3} \pscircle(3,1.9){0.3}
\psline[linestyle=dashed](0.3,0.6)(0.3,1.6)
\psline[linestyle=dashed](3,0.6)(3,1.6)
\psline{->}(0.6,0.3)(2.7,0.3) \psline{->}(0.6,1.9)(2.7,1.9)
\psline{->}(2.79,0.51)(0.51,1.69)
\psline{->}(2.79,1.69)(0.51,0.51)
\end{pspicture}\nopagebreak

Fig.2
\end{center}

In this figure, the dashed segments connect the pairs of the conjugate points, and $(x,y)\rightarrow (x',y')$ means $f_{37}(x,y)=(x',y').$ The set $\Omega_{37}$ numbers $34$ orbits with such a structure, and the set $\Omega_{46}$ --- $18$ ones.

\begin{defn}	\emph{The whole Guinness number is a number $G_n$, generated by the function $f_n$ with the help of a standard generating point $(n,0)$, if the length of its respective orbit $O_n(n,0)$ equals $|\Omega_n|-2$.}
 \end{defn}

 The one-half Guinness number is defined in the same way.
 \begin{defn} \emph{The one-half Guinness number, generated by the function $f_n$ with the help of a generating point $(n,0),$ is a number $G_n$, the respective orbit length of which equals $\frac{1}{2}|\Omega_n|-1$.}
 \end{defn}

We cite the program in the Maple language
$n := 2013: k := 4: x := 2013: y := 0: i := 0: z := x: t := y: m := n*z+t:t := floor(m/10^k): z := m-10^k*t: i := i+1: while `or`(x <> z, y <> t) do  m := n*z+t: t := floor(m/10^k): z := m-10^k*t: i := i+1 end do: print(i)$

In this program, at the input we have $k=4$-digit number $n=2013$ and the standard generating point $(n,0)=(2013,0),$ and at the output we have the orbit length $O_{2013}(2013,0)=i.$ Under this program, the computer produces the number $10064999$ after no longer than $5$-minute work. Since all the first components of each pair of the orbit is four-digit numbers, then the one-half Guinness number $G_{2013}$ contains only $$10064999\cdot 4=40259996$$ figures.

It is easy to make certain that among single-digit numbers, the one-half Guinness number is the number $9$, and the whole Guinness numbers are the numbers $2,3$ and $6$.

Let us cite all double-digit one-half Guinness numbers:
$$ 14,  \textbf{20,  21},  24,  27,  30,  33,  41,  48,  51,  54,  62,  66,  69,  75,  77,  87,  90,  92.$$
The following are all three-digit one-half Guinness numbers:
$$ 102 , 105 , 108 , 135 , 144 , 162 , 165 , 183 , 189 , 192 , 204 , 213 , 222 , 231 ,
240, 261,$$
 $$267 , 273 , 276 , 291 , 294 , 303 , 306 , 309 , 327 , 330 , 339 , 357 , 372 , 378 ,
 390,
 420,$$
 $$444 , 456 , 465 , 474 , 498 , 507 , 513 , 522 , 525 , 534 , 537 , 543 , 564 , 567 ,
 585,
 588,$$
 $$600 , 603 , 609 , 612 , 621 , 639 , 645 , 660 , 663 , 669 , 672 , 696 , 705 , 726 ,
 732,
 738,$$
 $$765 , 774 , 789 , 795 , 807 , 819 , 822 , 834 , 840 , 855 , 873 , 885 , 891 , 894 ,
 906,
 921,$$
 $$933 , 936 , 942 , 957 , 975 , 981 , 990.$$

 \begin{defn}	\emph{Two one-half Guinness numbers $G_n$ and $G_{n+1}$ are called \textbf{twin one-half Guinness numbers}.}
\end{defn}
Among the double-digit numbers, there is only one pair of the twin one-half Guinness numbers, and among the three-digit numbers there is no pair like that. The following are first few pairs of four-digit twin one-half Guinness numbers:
$$(1085,1086),\,\,(1091,1092),\,\,(1109,1110),\,\,(1160,1161),\,\,(1187,1188),
\,\,(1208,1209),$$ $$(1316,1317),\,\,
(1337,1338),\,\,(1370,1371),\,\,(1553,1554),\,\,(1658,1659),\,\,(1742,1743)
,$$
$$(1775,1776),\,\,(1796,1797),\,\,(1889,1890),\,\,(1922,1923),\,\,
(2000,2001),\,\,(2006,2007),$$
$$(2174,2175),\ldots.$$

	If the function $f_n$  in the set $\Omega_n$ generates $m_1$ orbits of the length $r_1,$  $m_2$ orbits of the length $r_2$ etc. $m_s$ orbits of the length $r_s,$ this fact will be denoted by
 $$\Omega_n \sim \{r_1^{m_1},r_2^{m_2},\ldots,r_s^{m_s}\},$$ while the following equality holds $|\Omega_n|=\sum_{i=1}^sr_im_i.$

Thus, decomposition of the Cartesian set $\Omega_n,$ connected with a positive integer $n$, by the orbits, to a great extent, resembles factorization of positive integers.
$$\Omega_{10} \sim \{1^{10},3^{330}\},\,\,
\Omega_{11}\sim \{1^2,3^2,39^{28}\},\,\,
 \Omega_{12}\sim \{1^{12},54^{22}\},\,\,
\Omega_{13}\sim \{1^4,216^6\}$$
$$\Omega_{14}\sim \{1^2,699^2\},\,\,
\Omega_{15}\sim \{1^2,107^{14}\},\,\,
\Omega_{16}\sim \{1^4,3^{12},5^{24},15^{96}\},\,\,
\Omega_{17}\sim \{1^2,283^6\}$$
$$\Omega_{18}\sim \{1^2,3^2,128^2,384^4\},\,\,
\Omega_{19}\sim \{1^{10},15^{126}\},\,\,
\Omega_{20}\sim \{1^2,999^2\},\,\,
\Omega_{21}\sim \{1^2,1049^2\}.$$

Due to this, it is natural that the following questions arise.

	\textbf{Problem 1.} \emph{Are there whole Guinness numbers $G_n$ for multi-digit numbers $n$?}

	\textbf{Problem 2.}  \emph{The cardinality of the set of the one-half Guinness numbers should be studied.}

	\textbf{Problem 3.} \emph{The law of distribution of positive integers $n$ in the natural sequence, for which numbers $G_n$ are one-half Guinness numbers, should be studied.}

	\textbf{Problem 4.} \emph{The cardinality of the set of twin Guinness numbers should be studied.}

\section{Operation $f_n$ and Multidigit Number Product}
\begin{verse}
\emph{ "The external is similar to the internal; the little is the same as the big; the law is one for all... " (Hermes)}
\end{verse}
 For simple positive integers $n,$
the operation $f_n$ is found in every algorithm for multidigit number multiplication.
\begin{exam}\emph{Suppose it is necessary to multiply a single-digit number $n=2$ by some positive integer $\overline{x_4x_3x_2x_1x_0}=72389.$ The product of these numbers can be obtained with the operation $f_2,$ which is sequentially applied to the pairs
$$(x_0,y_0)=(9,0),\,(x_1,y_1)=(8,1),\,(x_2,y_2)=(3,1),\,
(x_3,y_3)=(2,0),\,(x_4,y_4)=(7,0).$$ At that we get the respective pairs:
$$(x'_0,y'_0)=(8,1),\,(x'_1,y'_1)=(7,1),\,(x'_2,y'_2)=(7,0),\,
(x'_3,y'_3)=(4,0),\,(x'_4,y'_4)=(4,1).$$  Note that according to the multiplication algorithm, we start from the zero value $y_0,$ and all other values $y_i,\,i=1,2,3,4$ are selected according with the equalities $y_i=y'_{i-1},\,i=1,2,3,4.$ The product result is the number}
$$\overline{y'_4x'_4x'_3x'_2x'_1x'_0}.$$
\end{exam}
The operation $f_n$ generalizes the algorithm for multiplication of a single-digit number by an arbitrary positive $k$-digit number. Let us have some multidigit number $m$ and $k$--digit number $n.$ In order to find the product $mn$, it is necessary to partition a number $m$, starting from its end, into $s$ groups with $k$ figures in each one, and then to proceed as in the previous example.
\begin{exam}\emph{If, for instance, $m=2345678$ and $n=23,$
then in this case, the operation $f_{23}$ is applied sequentially to the following pairs
$$(x_0,y_0)=(78,0),\,(x_1,y_1)=(56,17),\,(x_2,y_2)=(34,13),\,
(x_3,y_3)=(2,7).$$ At that we obtain the respective pairs
$$(x'_0,y'_0)=(94,17),\,(x'_1,y'_1)=(05,13),\,(x'_2,y'_2)=
(95,7),\,(x'_3,y'_3)=(53,0)$$ and the product result is the number}
$\overline{0'53'95'05'94}.$
\end{exam}
Thus, we obtain the algorithm for multidigit number multiplication.

Now generation of the Guinness numbers generated by a $k$--digit positive integer $n$ and a standard generating point $(n,0)$ is obvious. To generate this number according to the described above algorithm for multidigit numbers, it is enough to write the first components of the orbit $O_n(n,0)$ from the right to the left in succession.

\section{Operation $f_n$ and Tilings}

\begin{verse}
\emph{Numbers like people have their ''face'' and ''nature''.}
\end{verse}

Since the operation $f_n$ for some $n$ partitions the set $\Omega_n$ into an even number of interconjugate orbits and a certain number of self-conjugate orbits, then with the help of this operation one can construct a rectangular tiling with the symmetry center at the intersection of the diagonals of this rectangular. For this, the points of the interconjugate orbits should be combined and filled with some color.
\begin{exam}
\label{exam4} $f_8$--operation partitions the set $\Omega_8$ into six orbits with the length of $13$ and two fixed points. Combining the sets of the interconjugate orbit pairs into one set, we get three groups with $26$ points in each one and one group consisting of two fixed points. Now we shall relate some color to each group of points, and the unit square on the Cartesian plane to each pair of these sets. Thus, we obtain a colored rectangular. Paste together several obtained colored rectangulars and obtain a centrally symmetric tiling, which is given by the positive integer $8.$

\begin{center}
\resizebox{0.75\textwidth}{!}{\includegraphics{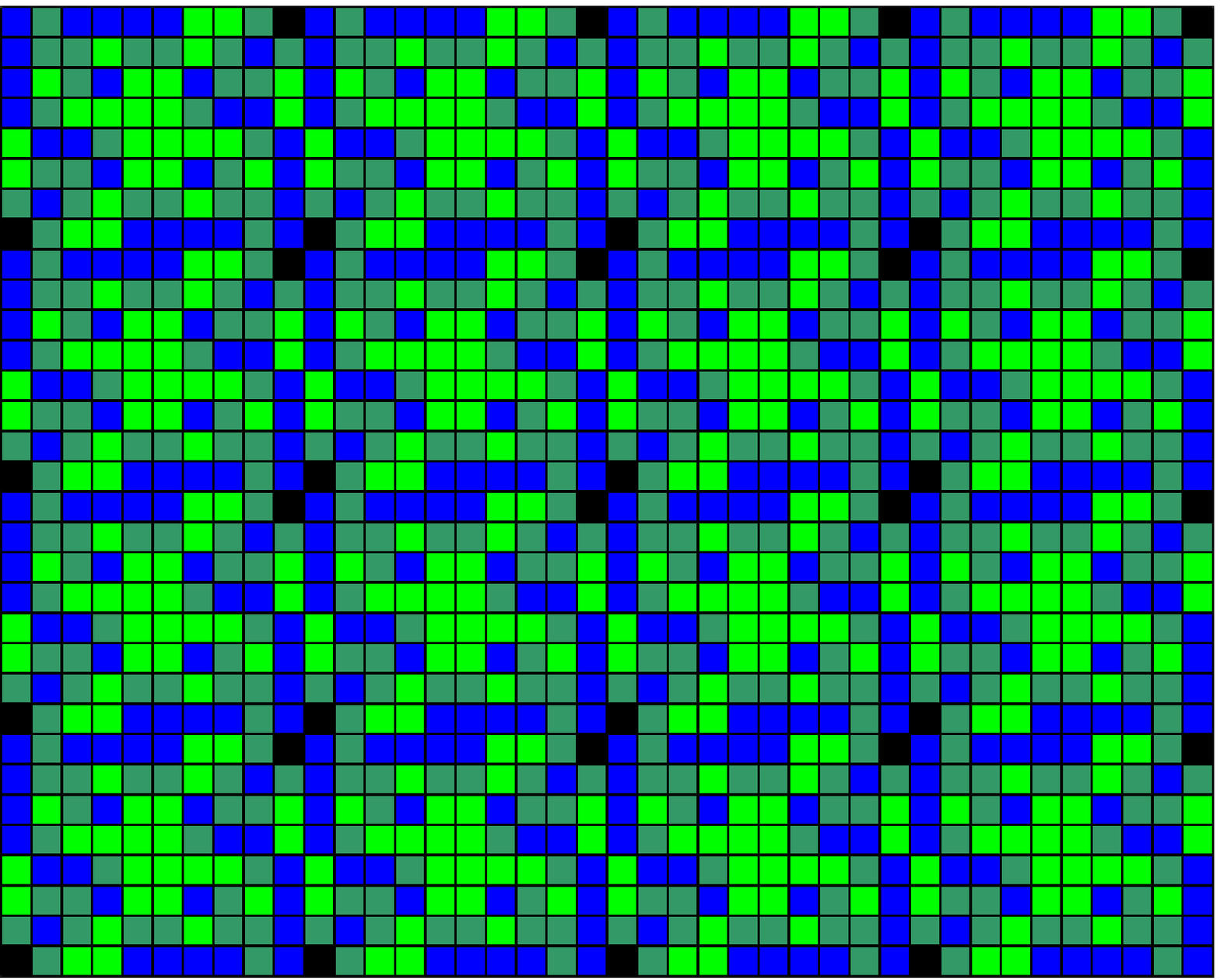}}\nopagebreak

Fig.3.
\end{center}

\end{exam}

Note that the distinctive pattern corresponds to the representation on the Cartesian plane of points relating to the orbits $O_n(n,0).$ Thus, each positive integer has its ''face'' and ''nature''. Below in Fig.4 and 5 the points of the orbits $O_9(9,0)$ and $O_7(7,0)$ are represented respectively.

\begin{center}
\resizebox{0.49\textwidth}{!}{\includegraphics{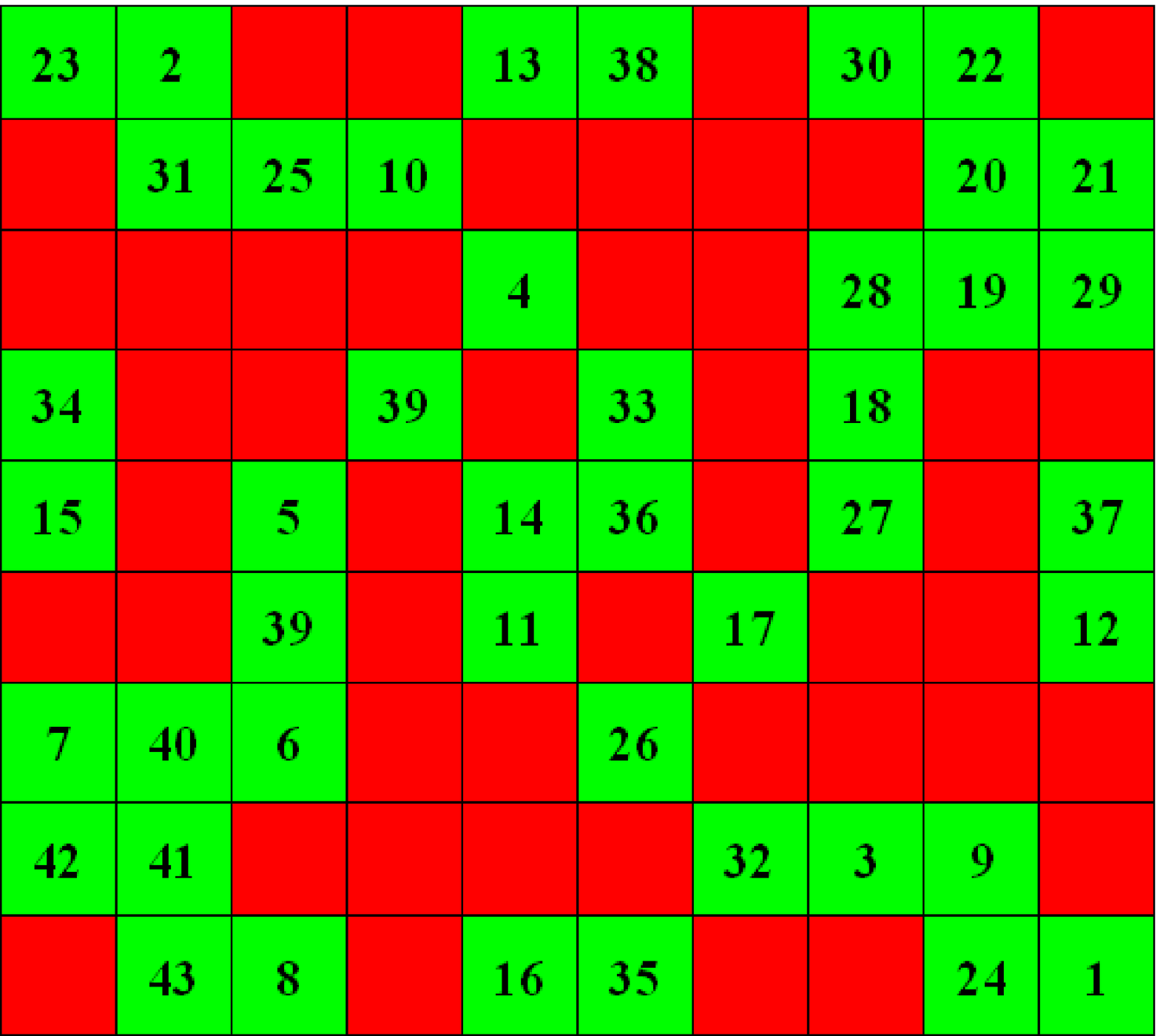}}\hfill
\raisebox{0.049\textwidth}{\resizebox{0.49\textwidth}{!}{\includegraphics{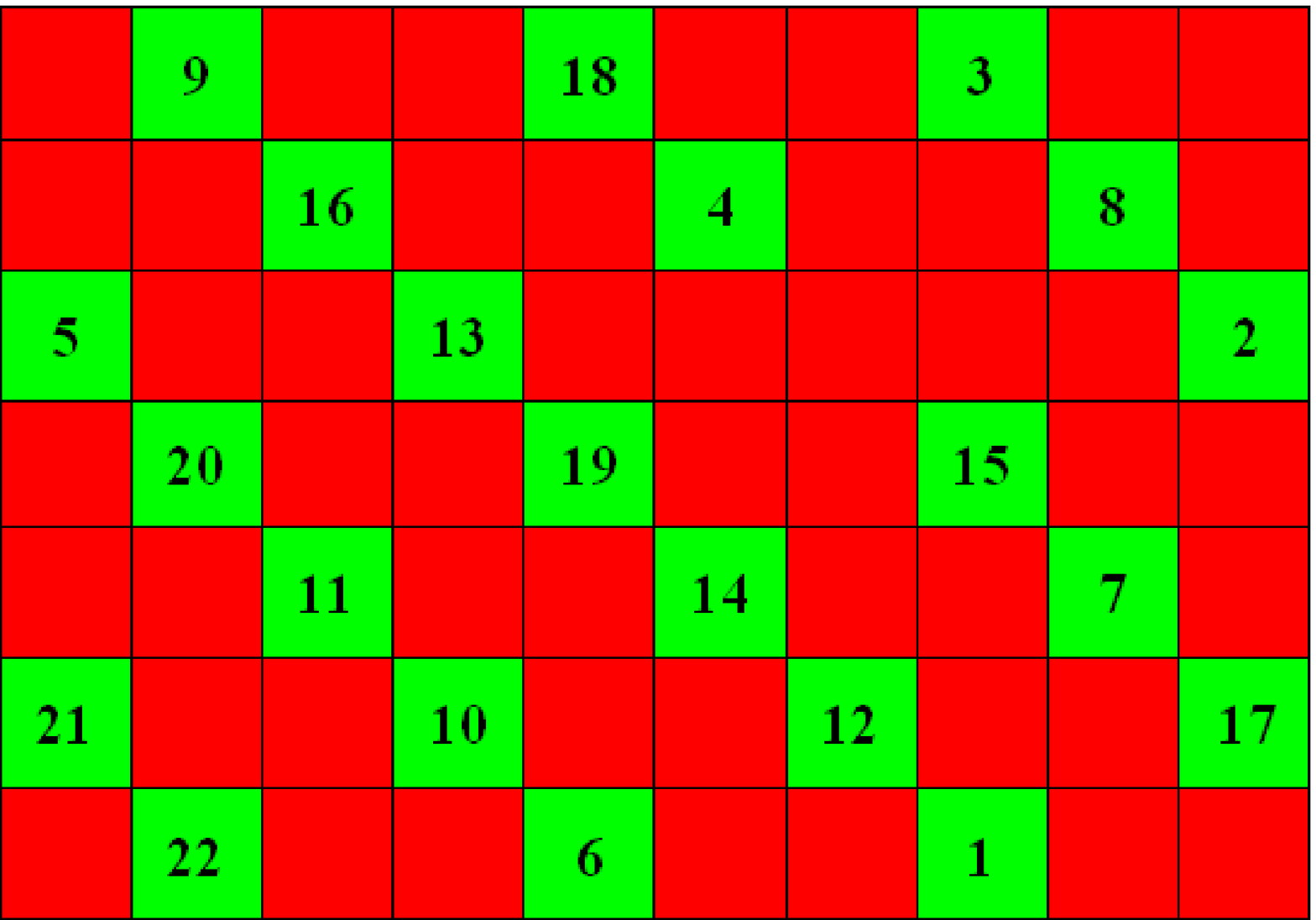}}}\nopagebreak

Fig.4.\hspace{54mm}  Fig.5.
\end{center}

\section{Guinness Numbers and Pseudorandom Number Generators}

\begin{quotation}\emph{
"The generation of random numbers is too important to be left to chance." (Robert R. Coveyou)}
\end{quotation}

The one-dimensional pseudorandom number generator (PRNG)\cite{Zub} stands for the algorithm generating the number sequence, the elements of which are almost independent of each other and evenly distributed on some segment.

PRNG is applied in various fields of human knowledge: computer science, programming, Monte Carlo method
\cite{Sobol}, cryptography \cite{Verb} etc.

 The first algorithm for generating pseudorandom numbers was offered by the American mathematician, one of the founders of computer science, John von Neumann. His algorithm is also known as the middle-square method consisting in choosing an arbitrary 4--digit number $a=0,a_1a_2a_3a_4;$ squaring it $a^2=0,a_1'a_2'a_3'a_4'a_5'a_6'a_7'a_8'$ and passing on to a new 4--digit number $0,a_3'a_4'a_5'a_6'$ etc.

The main advantage of this arithmetic algorithm is its simplicity, and its disadvantage is that it generates no more than 10000 different numbers.

In \cite{MMS}, the first author of this article has offered a two-dimensional arithmetic generator of pseudorandom numbers, which is based on the operation $f_n.$ This PRNG is somewhat similar to John von Neumann's arithmetic generator. The second author of the article \cite{MMS} has tested the offered generator in the system of distance learning and knowledge control, and the third author has tested it with the help of standard criteria: uniformity, intervals, "maximum--t" and poker criterion.

 In this article, the sequence of pseudorandom numbers is given by the first and second components of the orbit $O_n(n,0).$ In this context a positive integer $n$
is chosen so that the number $G_n$ is a whole or one-half Guinness number. For instance, with the help of the one-half Guinness number $G_{200000}$, we can construct an orbit, which is a two-dimensional random array with the period of
$99999999999.$  Despite John von Neumann's statement: "anyone who attempts to generate random numbers by deterministic means is, of course, living in a state of sin," the aforementioned generator has an enormous period and has been successfully tested.

 If, for instance, all the points of the orbit $O_{2000}(2000,0)$ are laid out on an A4 sheet of paper, they will cover it so that the sheet is solid grey at different intervals of time. Hence it follows that the aforesaid PRNG is also applicable to the Monte Carlo method.

One of the most accepted algorithms for generating pseudorandom numbers $x_n,\,\,n=0,1,2,\ldots,L$ is the algorithm offered by the American mathematician Derrick Henry Lehmer. This algorithm is given by the following equalities:
$$m_{n+1}\equiv5^{17}m_n(mod\, 2^{40}),\,\,n=0,1,2,\ldots,L,\,\,m_0=1,$$
$$x_n=2^{-40}m_n.$$

As for modern PRNG, Mersenne twister is widely applied. This generator was proposed in 1997 by Matsumoto and Nishimura. Its positive quality is its enormous period $(2^{19937}-1)$ and even distribution in 623 dimensions. But this generator cannot be applied in cryptography because there is an algorithm which identifies the sequence generated with the help of the Mersenne twister as nonrandom.

Let us point out one more application of the function $f_n.$
	 For rather high values $n$, to each generating point $(x,y)$ the function $f_n$ associates some array of numbers or its fragment, under which it is difficult to reproduce numbers $(n,x,y)$. It means the function $f_n$ is a difficult-to-invert function and can be used when generating key words for the one-time pad method.

\textbf{Problem 5.} \emph{The cryptosystem with the public key should be built based on the function $f_n$.}

	The author hopes that in future the function $f_n$ will be productively applied in a new way and will take the rightful place in the theory of numbers.


\begin{thebibliography}{22}
\bibitem{Zub}\emph{A.Zubynsky} In search of contingency. (Russian.) Computer Survey, 29 (2003).
\bibitem{Sobol}\emph{I.M.Sobol} Monte Carlo Method.--- "Nauka", Moscow,
1985.--- 80 p.
\bibitem{Verb}\emph{O.V.Verbitsky} Introduction to Cryptology,
VNTL.---Lviv, 1998.
\bibitem{MMS}\emph{R.A.Zatorsky, P.I.Fedoruk, N.M.Dyakiv} The random number generator in the remote control system of knowledge.//Mathematical Machines and System. --- 2004. --- №4.
--- P. 98--107.
\end{thebibliography}
\end{document}